\documentclass{amsart}

\usepackage{amsmath,amssymb,amsthm,amsfonts,amscd,array,graphicx}
\usepackage{mathrsfs}
\usepackage{lipsum}

\usepackage[T2A]{fontenc}     
\usepackage[cp1251]{inputenc} 
\usepackage[english]{babel}    

\unitlength=1mm
\setcounter{MaxMatrixCols}{12}
\binoppenalty=10000
\relpenalty=10000

\sloppy

\newcommand{\K}{\mathbb{K}}
\newcommand{\Z}{\mathbb{Z}}
\newcommand{\Aut}{\mathrm{Aut}}

\newcommand{\Y}{\mathrm{Y}}
\newcommand{\A}{\mathcal{A}}

\newcommand\blfootnote[1]{%
	\begingroup
	\renewcommand\thefootnote{}\footnote{#1}%
	\addtocounter{footnote}{-1}%
	\endgroup
}

\newtheorem{theorem}{Theorem}[section]

\newtheorem{lemma}[theorem]{Lemma}
\newtheorem{prop}[theorem]{Proposition}
\theoremstyle{definition}

\newtheorem{remark}[theorem]{Remark}

\author{Anton Trushin}


\title{Graded automorphisms of the algebra of polynomials in three variables}

\date{12.06.2022}

\address{Lomonosov Moscow State University, Faculty of Mechanics and Mathematics, Department of Higher Algebra, Leninskie Gory 1, Moscow, 119991 Russia}
\address{Moscow Center for Fundamental and Applied Mathematics, Moscow, Russia}
\address{HSE University, Faculty of Computer Science, Pokrovsky Boulvard 11, Moscow, 109028 Russia}

\subjclass[2020]{Primary 14R10, 13A02; Secondary 13N15, 13A50.}
\keywords{Tame automorphisms, wild automorphisms, graded rings, polynomial algebra, system of generators.}

\email{Trushin.ant.nic@yandex.ru}

\begin{document}

	\maketitle
	
	\begin{abstract}
		Let us consider a polynomial algebra in three variables equipped with an integer grading. We construct a system of group-generating automorphisms that preserve a given grading.
	\end{abstract}
	
	\blfootnote{Supported by the grant RSF-DST 22-41-02019.}
	
	\section{Introduction}
	Let $\K$ be an algebraically closed field of characteristic zero and let~$\A=\K[x_1,\ldots,x_n]$ denote the polynomial ring in $n$ variables over $\K$. Let~$\varphi = (f_1,\ldots,f_n)$ is a polynomial map of the algebra $\A$.
	Automorphisms of the form $$\varphi = (x_1, \ldots, x_{i-1},\ \lambda x_i + F,\ x_{i+1}, \ldots, x_n),$$ where $F \in \K [x_1, \ldots, x_{i-1},x_{i+1}, \ldots, x_n], \lambda \neq 0$, we call {\it elementary}. Automorphisms that can be represented as a composition of elementary ones are called {\it tame}. In particular, non-degenerate linear coordinate transformations are tame automorphisms. Automorphisms that are not tame are called {\it wild}.
	
	The question of the existence of wild automorphisms is currently solved for~$n\leq 3$.
	\begin{itemize}
		\item The group of automorphisms of the algebra $\K[x]$ is easily described, namely, all automorphisms are affine transformations: $x \mapsto \lambda x + c$. This means that there are no wild automorphisms among them.
		
		\item The group of automorphisms of the algebra of polynomials in two variables K[x,y] also has no wild automorphisms~\cite{b5}, moreover, the automorphism group is an amalgamated product of two subgroups, one of which is a subgroup of affine automorphisms~\cite{b1}.
		
		\item For $n = 3$, wild automorphisms exist: it was proved in \cite{b2} that the Nagata automorphism \cite{b7} is wild.
	\end{itemize}
	Thus, non-trivial systems of generators for the groups $\Aut\left(\K[x]\right)$ and $\Aut\left(\K[x,y]\right)$ are known.
	
	Suppose now that the algebra $\A$ has a grading by some group $G$. We denote this grading by $\varGamma$. An automorphism $\varphi$ will be called {\it $\varGamma$-graded} if $\varphi(\A_g) \subset \A_g$ for any $g \in G$, where $\A_g$ is a homogeneous degree component $g$. The $\varGamma$-gra\-dy\-and\-ro\-van\-th automorphisms form a subgroup $\Aut_{\varGamma}(\A)$ in the group $\Aut(\A)$.
	A graded automorphism is called {\it graded-tame} with respect to grading~$\varGamma$ if it can be represented as a composition of elementary graded automorphisms. Otherwise, we call such an automorphism {\it graded wild}. In \cite{b3} for $n = 4$, an example of a $\Z$-grading that admits graded wild automorphisms is given, and it is shown that Anik's automorphism is graded wild with respect to this grading. \cite{b6} describes all $\Z$-gradings of the algebra $\K[x,y,z]$ that admit graded wild automorphisms. For those gradings that do not admit graded wild automorphisms, graded elementary automorphisms form a natural system of generators for the group $\Aut_{\varGamma}(\A)$. In this paper we construct systems of generators of the group $\Aut_{\varGamma}(\A)$ for $\Z$-gradings of the algebra $\A=\K[x,y,z]$ that admit wild automorphisms.
	
	\section{Graded automorphisms}
	In this section, we present the results of \cite{b6}.\\
	Consider a non-trivial $\mathbb{Z}$-grading $\varGamma$ on the algebra $\A=\K[x,y,z]$ such that the variables $x$, $y$, and $z$ are homogeneous. If there are zeros among the degrees of variables, or if the degrees of all variables have the same sign, then, as \cite{b6} proved in the paper, all graded automorphisms are graded tame. Further, we will assume that there are no zeros among the powers of the variables and that they are not all of the same sign. Multiplying all three powers simultaneously by the same number does not change the set of automorphisms. Therefore, it can be considered that
	$$(\deg_{\varGamma}(x), \deg_{\varGamma}(y), \deg_{\varGamma}(z)) = (a,b,-c),\qquad a,b,c > 0,$$
	and the greatest common divisor of $a,b$ and $c$ is equal to one. The main result of \cite{b6} is the following theorem.
	\begin{theorem}
		A graded wild automorphism of the algebra $\K[x,y,z]$ with grading $\varGamma$ exists if and only if there exists a pair of natural numbers $P$ and $Q$ such that $a = cP + bQ$ , and~$Q \geqslant 2$.
	\end{theorem}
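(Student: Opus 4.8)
The plan is to treat the two implications separately, and in both to exploit that a \emph{graded} elementary automorphism $x_i \mapsto \lambda x_i + F$ is far more rigid than an ordinary one: here $F$ must be a \emph{homogeneous} polynomial in the remaining two variables of degree exactly $\deg_{\varGamma}(x_i)$, so the list of available elementary moves is dictated entirely by the arithmetic of $a,b,c$. I would first record that, since every graded elementary automorphism is in particular elementary, any graded automorphism that is wild in the ordinary sense is automatically graded wild; this lets me import the wildness of the Nagata automorphism from \cite{b2} when convenient.

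For the implication ($\Leftarrow$), suppose $a = cP + bQ$ with $P \ge 1$ and $Q \ge 2$. The arithmetic heart of the construction is the degree coincidence
\[
\deg_{\varGamma}(x z^{P}) \;=\; a - Pc \;=\; Qb \;=\; \deg_{\varGamma}(y^{Q}),
\]
so that $h = x z^{P} - y^{Q}$ is homogeneous of degree $Qb$. I would build a Nagata-type automorphism from $h$ together with the triangular, locally nilpotent derivation determined by $\partial(x) = Q y^{Q-1}$, $\partial(y) = z^{P}$, $\partial(z) = 0$, which annihilates $h$; exponentiating a homogeneous locally nilpotent derivation assembled from $h$, $z$ and $\partial$ produces a degree-preserving automorphism $\sigma$. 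The bookkeeping that makes the exponent land in degree $0$ --- rather than simply taking $\exp(h\partial)$, which is homogeneous only in special cases --- is the one delicate point of the construction and must be carried out by hand from the relation $a = cP + bQ$. To see that $\sigma$ is graded wild, I would argue that $Q \ge 2$ forces $\partial$ to be genuinely nonlinear, so that $\sigma$ specializes to (or, after fixing the grading, literally is) the Nagata automorphism, whose wildness is \cite{b2}; the contrast with $Q = 1$, where the analogous relation $\deg_{\varGamma}(xz^{P}) = \deg_{\varGamma}(y)$ makes $y \mapsto y + \lambda x z^{P}$ a genuine graded elementary move, confirms that $Q \ge 2$ is exactly the threshold at which tameness fails.

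For the implication ($\Rightarrow$) I would prove the contrapositive: if no pair with $Q \ge 2$ represents $a = cP + bQ$, then every graded automorphism is graded tame. The strategy is a reduction argument. Given a graded automorphism $\varphi$, I would attach to it a complexity measure --- for instance the degree in $x$, or the Newton polytope, of the images $\varphi(x), \varphi(y), \varphi(z)$ --- and show that one can always compose $\varphi$ with a suitable graded elementary automorphism to strictly decrease this measure, terminating in a linear (hence tame) map. The absence of a representation $a = cP + bQ$ with $Q \ge 2$ is precisely what guarantees that the homogeneous components $\A_a$, $\A_b$, $\A_{-c}$ carry no ``quadratic'' relation of Nagata type, so that at each stage the leading term can be cancelled by an admissible graded elementary automorphism and no wild obstruction can form.

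The step I expect to be the main obstacle is the termination of this reduction in the ($\Rightarrow$) direction: one must show simultaneously that a cancelling graded elementary move always exists and that the chosen complexity measure genuinely drops, which requires a careful description of all homogeneous polynomials of degrees $a$, $b$ and $-c$ and of the (limited) graded elementary automorphisms they admit. Making the explicit homogeneous construction and its wildness fully rigorous in the ($\Leftarrow$) direction is the secondary difficulty.
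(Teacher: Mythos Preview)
This theorem is not proved in the present paper at all: it is quoted verbatim as ``the main result of \cite{b6}'' and then used as a black box. So there is no in--paper proof to compare your proposal against. What the surrounding material does reveal is the machinery \cite{b6} builds for the purpose (the restriction $\alpha\colon\mathrm{E}\to\Aut_{\widetilde\varGamma}(\K[u,v])$, the lifting criterion of Lemma~\ref{ma}, the rearrangement Lemma~\ref{l3}), and that machinery is quite different from what you outline.

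Your $(\Leftarrow)$ construction has a concrete gap. With $h=xz^{P}-y^{Q}$ and $\partial=Qy^{Q-1}\partial_x+z^{P}\partial_y$, the derivation $\partial$ is $\varGamma$--homogeneous of degree $-b-cP$, while $h$ has degree $bQ$; hence $h\partial$ has degree $b(Q-1)-cP$, which vanishes only under the extra constraint $b(Q-1)=cP$. This happens in the Nagata case $(a,b,c)=(3,1,1)$, $P=1$, $Q=2$, but not in general. You flag this as ``bookkeeping to be carried out by hand'', yet the obvious fixes fail: replacing $h$ by $z^{j}h^{m}$ one needs $b(mQ-1)=c(j+P)$ in non--negative integers, and for instance with $(a,b,c)=(4,1,2)$, $P=1$, $Q=2$ this reads $2m-1\equiv 0\pmod 2$, which has no solution. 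So the exponential-of-LND route does not produce a graded automorphism for every grading covered by the hypothesis.

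Even when a graded $\sigma$ is produced, your plan to certify \emph{graded} wildness via \emph{ordinary} wildness is both stronger than needed and not available from \cite{b2} alone: Shestakov--Umirbaev prove the specific Nagata automorphism wild, not every automorphism of the form $\exp(\text{kernel element}\cdot\partial)$ for a triangular $\partial$, and ``specializing to Nagata'' does not transport wildness back to the general member. The lemmas imported from \cite{b6} point to the intended mechanism: one works on the plane $\Y$ via $\alpha$, uses the Jung--van der Kulk structure of $\Aut(\K[u,v])$, and shows that the arithmetic condition is exactly what makes the non--liftable elementary set $W$ contain a genuinely nonlinear element, so that some $\widetilde\varphi\in\alpha(\mathrm{E})$ admits no factorization into liftable elementary pieces. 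No ordinary wildness is invoked.

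For $(\Rightarrow)$ your contrapositive-by-complexity plan is plausible in spirit but is only a sketch; the apparatus the paper cites (Proposition~\ref{t2} and Lemma~\ref{l3}) again indicates that \cite{b6} runs the reduction inside $\Aut_{\widetilde\varGamma}(\K[u,v])$ via its amalgamated--product normal form, rather than by an ad hoc complexity on $\varphi(x),\varphi(y),\varphi(z)$.
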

	Consider two subgroups of the group $\Aut_\varGamma(\A)$:
	\begin{equation*}
		\mathrm{E} = \left\lbrace \varphi \in \Aut_{\varGamma}(\A) \big| \varphi(z) = z\right\rbrace,
		\qquad
		\mathrm{T}=\left\lbrace(x, y, \lambda z)\mid \lambda\neq 0\right\rbrace.
	\end{equation*}
	\begin{lemma}\cite[Lemma 3.8]{b6}\\
		The group $\Aut_{\varGamma}(\A)$ is isomorphic to the semidirect product~$\mathrm{E}\leftthreetimes\mathrm{T}$.
	\end{lemma}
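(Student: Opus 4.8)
The plan is to reduce the entire statement to one structural fact: every $\varGamma$-graded automorphism merely rescales $z$, i.e.\ $\varphi(z)=\lambda z$ for some $\lambda\in\K^{*}$. Granting this, the semidirect decomposition is essentially formal. I would define a map $\pi\colon\Aut_{\varGamma}(\A)\to\mathrm{T}$ by $\pi(\varphi)=(x,y,\lambda z)$, where $\lambda$ is the scalar with $\varphi(z)=\lambda z$. The one-line computation $(\varphi_{1}\varphi_{2})(z)=\varphi_{1}(\varphi_{2}(z))=\varphi_{1}(\lambda_{2}z)=\lambda_{1}\lambda_{2}\,z$ shows that $\pi$ is a group homomorphism, that its kernel is exactly $\mathrm{E}=\{\varphi:\varphi(z)=z\}$, and that $\pi$ restricts to the identity on $\mathrm{T}$. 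Hence $\mathrm{E}$ is normal, $\mathrm{E}\cap\mathrm{T}=\{\mathrm{id}\}$, and the short exact sequence $1\to\mathrm{E}\to\Aut_{\varGamma}(\A)\to\mathrm{T}\to1$ splits through the inclusion $\mathrm{T}\hookrightarrow\Aut_{\varGamma}(\A)$, which yields $\Aut_{\varGamma}(\A)\cong\mathrm{E}\leftthreetimes\mathrm{T}$.

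The heart of the matter, and the step I expect to be the real obstacle, is proving $\varphi(z)\in\K z$. This is not obvious, since the homogeneous component $\A_{-c}$ is in general far larger than $\K z$: besides $z$ it contains every monomial $x^{i}y^{j}z^{k}$ with $ai+bj=c(k-1)$ and $k\ge2$, and for the gradings that admit wild automorphisms such terms are genuinely present, so a priori $\varphi(z)$ could be a complicated polynomial. The key observation I would exploit is a divisibility phenomenon forced by the signs of the degrees: since $a,b>0$, every monomial of strictly negative $\varGamma$-degree must involve $z$, so the whole negative part $\bigoplus_{d<0}\A_{d}$ lies in the ideal $(z)$. In particular $\varphi(z)$, being homogeneous of degree $-c<0$, is divisible by $z$, and comparing degrees gives $\varphi(z)=z\,u$ with $u\in\A_{0}$.

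It then remains to show that $u$ is a nonzero constant, and here I would bring in the inverse automorphism. The inverse of a graded automorphism is again graded (immediate from comparing homogeneous components together with injectivity), so by the same divisibility argument $\varphi^{-1}(z)=z\,v$ with $v\in\A_{0}$. Applying $\varphi^{-1}$ to $\varphi(z)=zu$, and using that $\varphi^{-1}$ is a ring homomorphism carrying $\A_{0}$ into $\A_{0}$, gives $z=\varphi^{-1}(z)\,\varphi^{-1}(u)=z\,v\,\varphi^{-1}(u)$. Cancelling $z$ in the domain $\A$ yields $v\,\varphi^{-1}(u)=1$, so both factors are units of $\A$, hence nonzero scalars; since $\varphi$ fixes $\K$, this forces $u=\lambda\in\K^{*}$ and therefore $\varphi(z)=\lambda z$.

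The only routine facts left to record are that $\A_{0}$ is invariant under $\varphi^{\pm1}$ (a direct consequence of $\varphi(\A_{g})\subseteq\A_{g}$) and that the units of the polynomial ring $\A$ are exactly $\K^{*}$. I expect essentially all of the content to sit in the second and third paragraphs, in the interplay between the \emph{signs} of the degrees (which confines the negative part to the ideal $(z)$) and the absence of nonconstant units in $\A$; once $\varphi(z)=\lambda z$ is established, the split short exact sequence delivers the semidirect product with no further work.
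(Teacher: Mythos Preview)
Your argument is correct. The paper does not supply its own proof of this lemma; it merely cites \cite[Lemma~3.8]{b6}. What you have written is a clean, self-contained proof: the key step is indeed $\varphi(z)\in\K^{*}z$, and your justification---that the sign pattern $(a,b,-c)$ with $a,b,c>0$ forces every negatively graded monomial to be divisible by $z$, so $\varphi(z)=zu$ with $u\in\A_{0}$, and then the symmetric statement for $\varphi^{-1}$ makes $u$ a unit---is exactly the right mechanism. The one place that deserves a word of care is the claim that $\varphi^{-1}$ is again graded: the homogeneous components $\A_{g}$ are \emph{not} finite-dimensional here (e.g.\ $\A_{0}$ contains all $x^{i}y^{j}z^{k}$ with $ai+bj=ck$), so one cannot argue by dimension count; but your parenthetical ``comparing homogeneous components together with injectivity'' is the correct argument---writing $\varphi^{-1}(a)=\sum_{h}b_{h}$ and applying $\varphi$ shows all off-degree pieces vanish. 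Once $\varphi(z)=\lambda z$ is established, the split exact sequence argument is entirely standard and correct.
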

	Geometrically, automorphisms of the algebra $\A$ correspond to automorphisms of the three-dimensional affine space $\mathbb{A}^3$. Consider the plane $\Y = \mathbb{V}(z - 1)$.
	The automorphisms $\mathbb{A}^3$ corresponding to the automorphisms of the group $E$ preserve the plane $\Y$. The algebra of regular functions $\K[Y]$ is isomorphic to the algebra $\K[u,v]$, where $u = x\big|_\Y;\ v = y\big|_\Y$. Thus there is a homomorphism $$\alpha: \mathrm{E} \rightarrow \Aut(\K[u,v]),$$
	and $\alpha$ is structured as follows:
	$$
	\alpha(f,g,z)=(f(u,v,1),g(u,v,1)).
	$$
	\begin{remark}\cite[Corollary 3.13]{b6}\\
		The homomorphism $\alpha$ is injective.
	\end{remark}
	
	\begin{prop}\cite[Corollary 2.2]{b6}\label{p1}\\
		If the automorphism of the plane preserves $ (0,0) $, then it decomposes into a composition of elementary ones that preserve $ (0,0) $.
	\end{prop}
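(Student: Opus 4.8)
The plan is to deduce this from the tameness of planar automorphisms together with a normal-form argument that isolates the translation part. Recall that an elementary automorphism of $\K[u,v]$ has one of the two shapes $(\lambda u + F(v),\, v)$ or $(u,\, \mu v + G(u))$ with $\lambda,\mu\neq 0$; I will call it \emph{centered} if it fixes $(0,0)$, that is, if $F(0)=0$ (resp. $G(0)=0$). Since every automorphism of the plane is tame (as recalled in the introduction), the given automorphism $\varphi$ factors as $\varphi = e_1\circ\cdots\circ e_m$ with each $e_i$ elementary. The goal is to rewrite this product as $\varphi = \Phi\circ\tau$, where $\tau$ is a translation and $\Phi$ is a composition of \emph{centered} elementary automorphisms.

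Granting such a normal form, the conclusion is immediate: if $\varphi(0,0)=(0,0)$, then $\Phi(\tau(0,0)) = (0,0) = \Phi(0,0)$, and since $\Phi$ is a bijection fixing $(0,0)$ we obtain $\tau(0,0)=(0,0)$, whence $\tau=\mathrm{id}$ and $\varphi=\Phi$ is a composition of centered elementary automorphisms, as required. Note that the mere existence of \emph{some} factorization $\varphi=\Phi\circ\tau$ with $\Phi$ fixing $(0,0)$ is trivial, because translations act transitively on $\K^2$; the real content is that $\Phi$ can be kept a product of centered \emph{elementary} maps throughout.

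I would build the normal form by induction on $m$. The base case $m=1$ is a direct computation: a single elementary $(\lambda u + F(v),\, v)$ equals $(\lambda u + (F(v)-F(0)),\, v)\circ \tau$ with $\tau=(u+F(0)/\lambda,\,v)$, the first factor being centered, and the other type is symmetric. For the inductive step, after splitting off $e_1$ I must move the resulting translation to the right past the already-normalized tail. This is the crux, and I would record it as a separate commutation lemma: for a translation $\tau$ and a centered elementary $\phi$, the product $\tau\circ\phi$ can be rewritten as $\phi'\circ\tau'$ with $\phi'$ centered elementary and $\tau'$ a translation. Concretely, if $\phi=(\lambda u+p(v),\, v)$ with $p(0)=0$ and $\tau=(u+a,\,v+b)$, one checks that $\phi'=(\lambda u + q(v),\, v)$ with $q(v)=p(v-b)-p(-b)$ is again centered, and that $\tau'$ may be chosen so that $\tau\circ\phi=\phi'\circ\tau'$; the other type is handled symmetrically.

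Iterating this commutation lemma pushes a single translation to the right through an entire centered product, and feeding that into the induction yields the factorization $\varphi=\Phi\circ\tau$. The main obstacle is purely bookkeeping: one must commute translations past centered elementaries in the correct left-to-right order and verify at each step that the shifted polynomial $p(v-b)-p(-b)$ still has zero constant term, so that centeredness is preserved. No degree estimate or leading-term cancellation is needed beyond the tameness input, which does all the genuinely hard work.
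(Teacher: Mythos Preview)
Your argument is correct. The paper does not supply its own proof of this proposition; it is quoted verbatim as \cite[Corollary~2.2]{b6}, so there is nothing in the present paper to compare your approach against.

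For the record, your normal-form strategy is sound. Each elementary map factors as a centered elementary followed by a translation, and your commutation identity $\tau\circ\phi=\phi'\circ\tau'$ with $q(v)=p(v-b)-p(-b)$ (and its symmetric counterpart for the other triangular type) does preserve centeredness, so the induction pushes a single translation to the far right through any product of centered elementaries. The final step, that $\Phi(0,0)=(0,0)$ and $\varphi(0,0)=(0,0)$ force $\tau=\mathrm{id}$, is immediate. No gaps.
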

	
	\begin{prop}\cite[Proposition 2.3]{b6}\label{t2}\\
		Let $\widetilde{\varGamma}$ be a grading by an abelian group of the polynomial algebra in two variables. Then all graded automorphisms of $\A=\K[x,y]$ are graded-tame with respect to $\widetilde{\varGamma}$.
	\end{prop}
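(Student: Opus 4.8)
The plan is to prove the statement by induction on the total polynomial degree of a graded automorphism, refining the classical degree-reduction argument of van der Kulk so that every reduction step is realised by a $\widetilde{\varGamma}$-graded elementary automorphism. Throughout I assume the variables $x,y$ are $\widetilde{\varGamma}$-homogeneous, say $\deg_{\widetilde{\varGamma}}(x)=a$ and $\deg_{\widetilde{\varGamma}}(y)=b$ in the grading group $G$; this is the relevant setting here, and in general it can be arranged because a grading by an abelian group amounts to an action of a diagonalizable group, which on the plane is linearizable. Let $\varphi=(f,g)$ be a $\widetilde{\varGamma}$-graded automorphism, so that $f$ is homogeneous of degree $a$ and $g$ is homogeneous of degree $b$.

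First I would dispose of constant terms. A homogeneous polynomial has a nonzero constant term only in degree $0$, so if $a=0$ (resp.\ $b=0$) I may compose $\varphi$ with the translation $(x+c,\,y)$ (resp.\ $(x,\,y+c)$), which is itself a graded elementary automorphism precisely because the constant $c$ and the variable then share the degree $0$. After this we may assume $\varphi$ fixes the origin. The base case of the induction is the affine, hence linear, case: if $a\neq b$ the homogeneity of $f$ and $g$ forces $\varphi=(\alpha x,\beta y)$ to be diagonal, while if $a=b$ every element of $GL_2(\K)$ is graded and decomposes into graded transvections and diagonal maps via the standard factorisation of $GL_2$ into elementary matrices; either way $\varphi$ is graded-tame.

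For the inductive step, suppose $\varphi$ is not affine and, without loss of generality, $d_1:=\deg f\geq d_2:=\deg g$ with $d_1\geq 2$. By the leading-form analysis underlying \cite{b5} (and Proposition~\ref{p1}) the top-degree forms satisfy $\overline{f}=\lambda\,\overline{g}^{\,s}$ with $s=d_1/d_2\in\mathbb{Z}_{>0}$ and $\lambda\in\K^{\times}$. The key point, and the only place where the grading enters, is that $\overline{f}$ and $\overline{g}$, being the top-degree parts of the homogeneous polynomials $f$ and $g$, are themselves $\widetilde{\varGamma}$-homogeneous of degrees $a$ and $b$; comparing degrees in the identity $\overline{f}=\lambda\,\overline{g}^{\,s}$ yields $a=sb$ in $G$. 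Consequently $\tau=(x-\lambda y^{s},\,y)$ is a \emph{graded} elementary automorphism, and $\varphi\circ\tau=(f-\lambda g^{s},\,g)$ is again $\widetilde{\varGamma}$-graded (since $\deg_{\widetilde{\varGamma}}(g^{s})=sb=a$) with strictly smaller total degree. By the induction hypothesis $\varphi\circ\tau$ is a composition of graded elementary automorphisms, and since $\tau^{-1}=(x+\lambda y^{s},\,y)$ is also graded elementary, so is $\varphi$.

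The main obstacle is exactly the compatibility just highlighted: the classical reduction subtracts $\lambda g^{s}$ from $f$, and a priori there is no reason for the resulting elementary map to respect the grading. The observation that resolves it is that the relation $\overline{f}=\lambda\,\overline{g}^{\,s}$ is forced to hold between $\widetilde{\varGamma}$-homogeneous forms, so the exponent $s$ is pinned down by the degree equation $a=sb$, which is precisely the condition making $\tau$ graded. The remaining work is bookkeeping: checking that composition preserves homogeneity, that $\varphi$ never degenerates to a constant so that the induction terminates in the affine case, and that the borderline possibilities $a=0$ or $b=0$ are absorbed by the graded translations introduced at the start.
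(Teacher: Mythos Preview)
The paper does not prove this proposition; it is quoted from \cite[Proposition~2.3]{b6} and used as a black box, so there is no in-paper argument to compare against.

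Your argument is correct and is the natural one: a graded refinement of the Jung--van der Kulk degree reduction. The decisive observation, which you isolate cleanly, is that the leading-form identity $\overline{f}=\lambda\,\overline{g}^{\,s}$ holds between $\widetilde{\varGamma}$-homogeneous elements (since the top-degree part of a $\widetilde{\varGamma}$-homogeneous polynomial is again $\widetilde{\varGamma}$-homogeneous), so comparing degrees gives $a=sb$ in $G$ and the reducing elementary map $(x-\lambda y^{s},y)$ is automatically graded. The treatment of constant terms and of the linear base case (splitting on $a=b$ versus $a\neq b$) is also correct.

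Two minor remarks. First, your reference to Proposition~\ref{p1} is misplaced: that proposition concerns preservation of the origin, not the leading-form relation; the input you actually use is the classical Jung--van der Kulk structure theorem. Second, your aside that for an arbitrary abelian-group grading one may take $x,y$ homogeneous invokes linearisation of diagonalisable group actions on $\mathbb{A}^{2}$, which is true over $\K$ but not entirely trivial; in the context of this paper the variables are homogeneous by hypothesis, so the issue does not arise.
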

	
	We say that an automorphism $\widetilde{\varphi}$ of the plane $\Y$ lifts to a space automorphism if $\alpha^{-1}(\widetilde{\varphi}) \neq \varnothing$.
	
	Consider a grading ${\widetilde{\varGamma}}$ on the algebra $\K[u,v]$ by a cyclic group $\Z_c$ such that $(\deg_{\widetilde{\varGamma}}(u), \deg_ {\widetilde{\varGamma}}(v)) = (\overline{a}, \overline{b})$, where $\overline{a}$ and $\overline{b}$ are the images of $a$ and $b$ under the natural homomorphism from $\Z$ to $\Z_c$. It is easy to see that for any $\varphi\in E$ the automorphism $\alpha(\varphi)$ is ${\widetilde{\varGamma}}$-graded. We need the following lemma from~\cite{b6}.
	\begin{lemma}\cite[Lemma 3.14]{b6}\label{ma}\\
		Let $\widetilde{\varphi} = (\widetilde{f},\widetilde{g}) \in \Aut_{\widetilde{\varGamma}}(\K[u,v])$. Then $\alpha^{-1}(\widetilde{\varphi}) = \varnothing$ if and only if $\widetilde{f}$ contains a monomial $v^q$ such that $bq < a$, or $\widetilde{g}$ has a non-zero intercept.
	\end{lemma}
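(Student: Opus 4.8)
The plan is to describe the image of $\alpha$ by \emph{homogenization}. If $\varphi=(f,g,z)\in\mathrm{E}$, then $f$ and $g$ are $\varGamma$-homogeneous of degrees $a$ and $b$; a monomial $x^iy^jz^k$ of $f$ satisfies $ai+bj-ck=a$ with $k\geq 0$, so $ai+bj\geq a$, and restriction to $z=1$ sends it to $u^iv^j$. Since for fixed $(i,j)$ the exponent $k$ is determined by the degree, the map $f\mapsto f|_{z=1}$ is a bijection between $\varGamma$-homogeneous polynomials of degree $a$ and those polynomials in $u,v$ all of whose monomials obey $ai+bj\geq a$. The only monomials violating this are $v^q$ with $bq<a$, since any monomial involving $u$ already has weight at least $a$; likewise $\widetilde g$ has a homogeneous lift to degree $b$ exactly when none of its monomials has $ai+bj<b$, which in the regime relevant here ($a>b$, as in the wild case of the main theorem) reduces to the absence of a nonzero constant term. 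This settles the ``only if'' direction at once: if either obstruction occurs, the forced---and unique---homogeneous lift of the offending component fails to lie in $\K[x,y,z]$, so $\alpha^{-1}(\widetilde\varphi)=\varnothing$.

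For the converse, suppose neither obstruction occurs, let $f,g$ be the homogenizations of $\widetilde f,\widetilde g$, and set $\varphi=(f,g,z)$; the task is to check that $\varphi$ is genuinely an automorphism. The absence of obstructions forces the constant terms of $\widetilde f$ and $\widetilde g$ to vanish, so $\widetilde\varphi$ fixes $(0,0)$, and therefore so does $\widetilde\varphi^{-1}$. Introduce the integer weight $w$ with $w(u)=a$, $w(v)=b$, and for a polynomial let $w(\,\cdot\,)$ denote its least monomial weight. In the regime $a>b$ the hypotheses say exactly that $w(\widetilde f)\geq a$ and $w(\widetilde g)\geq b$, i.e.\ $\widetilde\varphi$ does not lower weights. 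I would then show that $\widetilde\varphi^{-1}=(\widetilde h,\widetilde k)$ also does not lower weights; granting this, $\widetilde h,\widetilde k$ satisfy the same inequalities and hence homogenize to polynomials $h,k$. Restricting the composite to $z=1$ gives $f(h,k,z)|_{z=1}=u=x|_{z=1}$ and the analogue for $y$, and by injectivity of restriction on homogeneous polynomials we get $f(h,k,z)=x$, $g(h,k,z)=y$, together with the symmetric identities. Thus $(h,k,z)$ is a two-sided inverse of $\varphi$, so $\varphi\in\mathrm{E}$ and $\alpha(\varphi)=\widetilde\varphi$.

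The crux, and the step I expect to be the main obstacle, is showing that $\widetilde\varphi^{-1}$ does not lower weights; I would argue through the associated graded map. Write $\widetilde f=f_a+(\text{higher }w)$ and $\widetilde g=g_b+(\text{higher }w)$ for the least-weight parts. Since $\widetilde\varphi$ is an automorphism its Jacobian $\widetilde f_u\widetilde g_v-\widetilde f_v\widetilde g_u$ is a nonzero constant, of weight $0$; but that Jacobian has least weight at least $w(\widetilde f)+w(\widetilde g)-a-b\geq 0$, which forces $w(\widetilde f)=a$, $w(\widetilde g)=b$ and identifies the weight-$0$ part of the Jacobian with $\mathrm{Jac}(f_a,g_b)$. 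Hence $(f_a,g_b)$ is a weighted-homogeneous endomorphism of $\K[u,v]$ with nonzero constant Jacobian, so by the two-variable theory it is an automorphism---that is, the associated graded of $\widetilde\varphi$ is invertible. A filtered endomorphism with invertible associated graded admits a weight-nondecreasing inverse, constructed one weight at a time, and by uniqueness this inverse coincides with $\widetilde\varphi^{-1}$; this yields the inequalities needed above. The delicate bookkeeping is to carry the two structures in parallel---the $\Z_c$-grading $\widetilde\varGamma$ under which $\widetilde\varphi$ is graded, and the auxiliary integer weight $w$ that governs homogenizability---and to use the hypothesis $a>b$ to collapse the general weight obstruction on $\widetilde g$ down to its constant term.
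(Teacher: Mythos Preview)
The paper does not give its own proof of this lemma; it is quoted from \cite{b6} without argument. So there is no in-paper proof to compare against, and I will simply assess your proposal.

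Your argument is correct and follows the natural route via homogenization. A few remarks where it can be sharpened:

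\medskip
\noindent\textbf{The hypothesis $a>b$.} You correctly add this; the lemma as literally stated is false without $a\geq b$ (e.g.\ with $(a,b,c)=(1,2,1)$ the automorphism $(u,\,v+u)$ has neither obstruction yet does not lift, because the monomial $u$ in $\widetilde g$ has weight $1<2$). In the paper the lemma is only invoked after Remark~\ref{r2}, where $a>b$, and presumably \cite{b6} carries a standing convention $a\geq b$.

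\medskip
\noindent\textbf{Avoiding the two-variable Jacobian theorem.} You invoke ``the two-variable theory'' to conclude that the leading pair $(f_a,g_b)$ is an automorphism. Under $a>b$ this is overkill: the only monomials of weight exactly $a$ are $u$ and (if $b\mid a$) $v^{a/b}$, while the only monomial of weight exactly $b$ is $v$. Hence $f_a=\lambda u+\mu v^{a/b}$ and $g_b=\nu v$, a triangular map whose Jacobian is $\lambda\nu$; the constant-Jacobian condition forces $\lambda\nu\neq 0$, and invertibility is immediate.

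\medskip
\noindent\textbf{The filtered-inverse step.} Your phrase ``constructed one weight at a time'' works but is roundabout. Cleaner: once the associated graded $\psi=(f_a,g_b)$ is an automorphism, for any nonzero $p$ with $w(p)=m$ one has $\widetilde\varphi(p)=\psi(p_m)+(\text{weight}>m)$ with $\psi(p_m)\neq 0$, so $w(\widetilde\varphi(p))=w(p)$ exactly. Applying this to $p=\widetilde\varphi^{-1}(q)$ gives $w(\widetilde\varphi^{-1}(q))=w(q)$ for all $q$, so $\widetilde\varphi^{-1}$ is weight-preserving, and in particular $w(\widetilde h)\geq a$, $w(\widetilde k)\geq b$ as you need.

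With these adjustments the proof is complete and self-contained.
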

	
	\begin{lemma}\cite[Lemma 3.17]{b6}\label{l3}\\
		Let $\xi_1 \in \Aut_{\widetilde{\varGamma}}(\K[u,v])$ is an elementary automorphism, $\xi_2 \in \Aut_{\widetilde{\varGamma}}(\K[u,v])$ is a linear automorphism. Then $\xi_2\circ\xi_1 = \zeta_2\circ\zeta_1\circ\zeta_0$, where $\zeta_0$ and $\zeta_2$ are linear automorphisms, $\zeta_1$ is an elementary automorphism, and $\zeta_2(u) = \lambda u$.
	\end{lemma}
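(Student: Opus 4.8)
The plan is to prove the lemma by an explicit construction followed by a coefficient comparison, after recording the shapes of the maps involved. Write the graded linear map as $\xi_2(u)=\alpha u+\beta v$, $\xi_2(v)=\gamma u+\delta v$ with $\alpha\delta-\beta\gamma\ne 0$, and note that, since $\xi_2$ respects $\widetilde{\varGamma}$, an off-diagonal coefficient ($\beta$ or $\gamma$) can be nonzero only when $\deg_{\widetilde{\varGamma}}(u)=\deg_{\widetilde{\varGamma}}(v)$; in that situation \emph{every} linear map is graded, which is exactly the freedom the construction will exploit. The elementary map $\xi_1$ has one of the two shapes $(\lambda u+F(v),v)$ or $(u,\lambda v+G(u))$, with $F$, $G$ homogeneous of degree $\deg_{\widetilde{\varGamma}}(u)$, resp.\ $\deg_{\widetilde{\varGamma}}(v)$. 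Finally, the requirement $\zeta_2(u)=\lambda u$ picks out the subgroup of graded linear maps sending $u$ to a scalar multiple of itself (the ``lower-triangular'' graded linear maps), so the lemma asserts that the left linear factor can be taken lower-triangular at the cost of an extra linear factor $\zeta_0$ on the right and a modified elementary factor $\zeta_1$.

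I would organise the argument by the type of $\xi_1$, and in each type separate a generic from a degenerate subcase according to how $\xi_2$ acts on $u$. In the generic subcase one takes $\zeta_0=\mathrm{id}$: expanding $\xi_2\circ\xi_1$ and comparing with $\zeta_2\circ\zeta_1$, the nonlinear part of $\xi_2\circ\xi_1$ is absorbed into the additive term of an elementary $\zeta_1$ of the \emph{same} type as $\xi_1$, whose coefficient is obtained from $F$ (or $G$) by rescaling the argument, while the residual linear data is recorded in a lower-triangular $\zeta_2$. One then checks that $\zeta_2$ is invertible (its determinant is a nonzero multiple of $\alpha\delta-\beta\gamma$) and that the rescaled $F$ (or $G$) is still homogeneous of the correct degree, so that all three factors are genuinely $\widetilde{\varGamma}$-graded; this homogeneity check is precisely where $\deg_{\widetilde{\varGamma}}(u)=\deg_{\widetilde{\varGamma}}(v)$ is invoked whenever an off-diagonal coefficient is present.

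The degenerate subcase is where $\xi_2$ moves $u$ essentially into the $v$-direction: for $\xi_1=(\lambda u+F(v),v)$ this is $\alpha=0$, and for $\xi_1=(u,\lambda v+G(u))$ it is $\beta\ne 0$, so that after composition the nonlinearity sits in the coordinate incompatible with a lower-triangular $\zeta_2$. Here $\zeta_0=\mathrm{id}$ cannot succeed, since matching the first coordinate would force a non-invertible elementary factor. The remedy is to let $\zeta_0$ be a linear coordinate transposition (interchanging the roles of $u$ and $v$) and to switch $\zeta_1$ to the \emph{other} elementary type; with these choices the same coefficient comparison closes, the homogeneity of the rescaled additive term again being guaranteed by $\deg_{\widetilde{\varGamma}}(u)=\deg_{\widetilde{\varGamma}}(v)$, which holds automatically because $\xi_2$ is off-diagonal. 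I expect this degenerate subcase to be the main obstacle: it is the only place where $\zeta_0\ne\mathrm{id}$ is forced, and it requires both the transposition in $\zeta_0$ and the change of type of the elementary factor. Everything else is routine bookkeeping of coefficients and of the $\widetilde{\varGamma}$-degrees, which I would carry out once these two reductions are in place.
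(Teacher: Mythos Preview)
The paper does not prove this lemma; it is quoted from \cite[Lemma~3.17]{b6} without argument, so there is no in-paper proof to compare against.

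Your plan is sound and would prove the statement. The case split by the type of $\xi_1$, followed by the generic/degenerate split according to whether the relevant entry of $\xi_2$ vanishes, is the natural organisation, and the key observation---that in the degenerate subcase one must take $\zeta_0$ to be the coordinate swap and switch the type of the elementary factor---is exactly the nontrivial step. One small inaccuracy: in the generic subcase the additive term of $\zeta_1$ is not $F$ (or $G$) ``with rescaled argument'' but rather a scalar multiple of it plus a possible linear correction. For instance, with $\xi_1=(\lambda u+F(v),v)$ and $\alpha\neq 0$, matching coefficients gives $\zeta_1=(\mu u+H(v),v)$ with $H(v)=(\alpha F(v)+\beta v)/\lambda'$ for any chosen $\lambda'\neq 0$, and $\zeta_2=(\lambda' u,\ (\gamma\lambda'/\alpha)u+(\delta-\gamma\beta/\alpha)v)$. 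This $H$ is still $\widetilde{\varGamma}$-homogeneous of degree $\overline{a}$ because $\beta\neq 0$ forces $\overline{a}=\overline{b}$, so the correction is harmless and your bookkeeping goes through unchanged.
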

	
	\section{Generator system}
	
	In the case when the grading does not admit wild automorphisms, the system of generators consists of elementary graded automorphisms. Let now the grading $\varGamma$ admit wild automorphisms.
	
	\begin{remark}\label{r2}
		If the grading $\varGamma$ admits wild automorphisms, then by Theorem 2.1
		$$\deg_{\varGamma}(f) = a > b = \deg_{\varGamma}(y).$$
	\end{remark}
	
	Let $\varphi = (f, g, z)$ and $\alpha(\varphi) = \widetilde{\varphi} = (\widetilde{f},\widetilde{g}) \in \Aut_{\widetilde{\varGamma}}(\K[u,v])$.
	
	\begin{prop}\label{p2}
		The automorphism $\widetilde{\varphi}$ can be represented as a composition of graded elementary automorphisms $\xi'_n\circ\ldots\circ \xi'_1$ such that for any $j$ the linear part of the polynomial $\xi'_j(u) )$ is equal to $\lambda_j u$ and $\xi'_j(0,0) = (0,0)$.
	\end{prop}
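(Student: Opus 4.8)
The plan is to push the hypotheses through three reductions, each invoking one of the results recalled above. First I would show that $\widetilde\varphi$ fixes the origin. Since $\varphi=(f,g,z)\in\mathrm{E}$ satisfies $\alpha(\varphi)=\widetilde\varphi$, we have $\alpha^{-1}(\widetilde\varphi)\neq\varnothing$, so by Lemma~\ref{ma} neither obstruction occurs: $\widetilde g$ has zero constant term, and $\widetilde f$ contains no monomial $v^q$ with $bq<a$. Applying the latter with $q=0$, for which $b\cdot 0=0<a$ because $a>0$, shows that $\widetilde f$ also has zero constant term. Hence $\widetilde f(0,0)=\widetilde g(0,0)=0$, i.e. $\widetilde\varphi(0,0)=(0,0)$.

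Second, I would invoke Proposition~\ref{t2}: since $\widetilde\varphi\in\Aut_{\widetilde\varGamma}(\K[u,v])$, it is graded-tame, so $\widetilde\varphi=\xi_N\circ\cdots\circ\xi_1$ for some graded elementary $\xi_i$. It then remains to upgrade this factorization so that every factor also fixes the origin and has $u$-image with linear part a multiple of $u$. Here the grading already does most of the work. A $u$-modifying graded elementary has the form $(\lambda u+F(v),v)$ with $F$ homogeneous of $\Z_c$-degree $\overline a$; its constant term can be nonzero only if $\overline a=\overline 0$, and its linear term $\mu v$ can be nonzero only if $\overline b=\overline a$. Symmetrically, a $v$-modifying graded elementary $(u,\mu v+G(u))$ can fail to fix the origin only if $\overline b=\overline 0$. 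Because $\gcd(a,b,c)=1$, the degrees $\overline a$ and $\overline b$ cannot both vanish unless $c=1$, in which case the grading is trivial and the statement reduces to Proposition~\ref{p1} together with Lemma~\ref{l3}. So outside the trivial case only one type of defective factor can appear, and the defect is confined to constant terms (when $\overline a=\overline 0$ or $\overline b=\overline 0$) and to linear $v$-terms in $u$-images (when $\overline a=\overline b$).

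Finally, I would remove these defects by processing the product inductively. For a linear $v$-term in a $u$-modifying factor I would split the factor into a genuinely nonlinear graded elementary and a graded linear automorphism, and transport the linear automorphism to the right through the remaining factors by repeated use of Lemma~\ref{l3}; that lemma guarantees the transported linear map scales $u$, so it contributes only factors whose $u$-image has linear part $\lambda u$, while the middle elementary it produces stays graded. For a spurious constant term I would peel it off as a graded degree-$0$ shift and use that the whole composition fixes the origin (Step~1) to cancel the accumulated shifts. The hard part will be organizing this cleanup so that it terminates: relocating a linear automorphism via Lemma~\ref{l3} can reintroduce constant or linear terms in the factors it passes, so I expect to need an induction on a measure such as the pair (total degree of $\widetilde\varphi$, number of nonlinear factors) ordered lexicographically, checking at each step that both gradedness and the origin-fixing property established in Step~1 are preserved; the asymmetry $a>b$ from Remark~\ref{r2} fixes which variable plays the distinguished role throughout.
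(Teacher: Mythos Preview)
Your Step~1 is fine, and Lemma~\ref{ma} actually gives you more than you extract: taking $q=1$ together with $a>b$ (Remark~\ref{r2}) shows that the linear part of $\widetilde f=\widetilde\varphi(u)$ is already a multiple of $u$. The paper uses exactly this fact at the final step of its induction, and you will need it too.

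The genuine gap is that you invoke Proposition~\ref{p1} only in the degenerate case $c=1$. In all other cases you propose to remove spurious constant terms by peeling off translations and letting them ``cancel against the global origin-fixing property,'' but a translation conjugated through a nonlinear elementary does not stay a translation, so the accumulated shifts do not cancel in any evident way and your suggested lexicographic measure is never shown to decrease; you yourself flag this as the hard part left undone. The paper sidesteps the whole issue by applying Propositions~\ref{t2} and~\ref{p1} together at the outset, unconditionally: since $\widetilde\varphi$ fixes $(0,0)$, one gets a decomposition $\widetilde\varphi=\xi_n\circ\cdots\circ\xi_1$ into graded elementary automorphisms that \emph{already} satisfy $\xi_j(0,0)=(0,0)$. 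Your case analysis on $\overline a,\overline b$ is then unnecessary. With the constants gone, only the linear-part condition on $\xi_j(u)$ remains, and this is handled by a single top-to-bottom sweep rather than a complexity measure: if the current factor has an unwanted $v$-term in the linear part of its $u$-image, split off $\tau=(u+\lambda v,v)$ and absorb $\tau$ into the factor immediately below; if the current factor is linear, apply Lemma~\ref{l3} to it and its lower neighbour. The bottom factor $\xi_1$ is then forced to have the right shape because the linear part of $\widetilde\varphi(u)$ is a multiple of $u$. No termination subtlety arises because the induction simply walks down the list once.
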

	
	\begin{proof}
		Due to remark \ref{r2} the automorphism $\varphi$ preserves the origin, moreover, the degree of $f$ is greater than the degree of $g$, which means that monomials of the form $yz^m$ have zero coefficients in the polynomial $f$ . This implies that the linear part of the polynomial $\widetilde{f}$ is equal to $\lambda u$. By theorem \ref{t2} and proposition \ref{p1} the automorphism $\widetilde{\varphi}$ can be decomposed into a composition of graded elementary ones: $\widetilde{\varphi} =\xi_n\circ\ldots\circ \xi_1$ such that $\xi_j(0,0) = (0,0)$. We may assume that $\xi_1$ is linear, possibly identical. We apply induction on $t$, where $0\leq t\leq n$, and show that the automorphism $\widetilde{\varphi}$ can be represented as the composition $\xi'_n\circ\ldots\circ \xi'_1$ elementary graded automorphisms such that for any $j$ such that $n-t < j\leq n$ the linear part of $\xi'_j(u)$ is equal to $\lambda_j u$.
		
		{\it Base.}\\
		For $t = 0$ the assertion is obvious.\\
		
		{\it Step.} Let us put $k = n - t$.
		By inductive hypothesis, we have $\widetilde{\varphi} =\xi_n\circ\ldots\circ \xi_1$ and for all $k < j\leq n$ linear part of $\xi_j(u)$ is equal to $\lambda_j u$ and $\xi_1$ is linear.
		
		If $\xi_{k}$ is not linear, then either the linear part of $\xi_{k}(u)$ is equal to $\lambda u$ or $\xi_{k}=(\nu u + \lambda v + f (v), v)$. In the second case, we put $\tau=(u + \lambda v, v)$. We can replace $\xi_{k}$ with $\xi_{k}'=\xi_{k}\circ \tau^{-1}$ and $\xi_{k-1}$ with $\xi_{k -1}'=\tau\circ\xi_{k-1}$. In both cases the linear part of $\xi_{k-1}'(u)$ is equal to $\lambda_j u$.
		
		If $\xi_{k}$ is linear and $k>1$, then we can assume that $\xi_{k-1}$ is not linear and $k>2$. By the lemma \ref{l3}, the composition $\xi_{k}\circ\xi_{k-1}$ can always be written as $\zeta_2\circ\zeta_1\circ\zeta_0$, where $\zeta_0$ and $\ zeta_2$ are linear, the linear part is $\zeta_2(u) = \lambda' u$, and $\zeta_1$ is elementary. Then we will replace $\xi_{k}\circ\xi_{k-1}\circ\xi_{k-2}$ with $\xi_{k}'\circ\xi_{k-1}'\circ\xi_ {k-2}'$,\ where $\xi'_{k} = \zeta_2,\ \xi'_{k-1} = \zeta_1,\ \xi'_{k-2} = (\zeta_0 \circ\xi_{k-2})$. Then the linear part of $\xi_{k}'(u)$ is equal to $\lambda'_{k} u$.
		
		Finally, if $k = 1$, then by the induction hypothesis we have $\xi'_n\circ\ldots\circ\xi'_2(u) = \lambda_2\cdot\ldots\cdot\lambda_n u$. Since the linear part of $\widetilde{\varphi}(u)$ is equal to $\lambda u$, then $\xi'_1(u) = \frac{\lambda}{\lambda_2\cdot\ldots\lambda_n}u$ .
	\end{proof}
	
	Denote the degree of the smallest monomial in a polynomial in one variable $f$ as $\underline{\deg}(f)$.
	Consider the following sets of $\widetilde{\varGamma}$-graded automorphisms of the algebra $\K[u,v]$:\\
	$$D = \{(u, \lambda v + u^k)| ka \equiv b(\mathrm{mod}\ c), \lambda \in \mathbb{K}^\times\},$$
	$$U = \{(\lambda u + f(v), v)| \deg_{\widetilde{\varGamma}}(f(v)) = \overline{a},\underline{\deg}(f) \geq \frac{a}{b}, \lambda \in \mathbb{ K}^\times\},$$
	$$W = \{(\lambda u + f(v), v)| \deg_{\widetilde{\varGamma}}(f(v)) = \overline{a}, \deg(f) < \frac{a}{b}, \lambda \in \mathbb{K}^\times \}.$$
	
	\begin{remark}
		By Lemma \ref{ma}, automorphisms from $D$ and $U$ can be lifted to space automorphisms, and automorphisms from $W$ can be lifted to space automorphisms if and only if $f = 0$.
	\end{remark}
	
	\begin{remark}
		The sets $U$ and $W$ are subgroups of the automorphism group.
	\end{remark}
	
	Let $\tau \in W$, $\theta \in D$, let $\tau^{-1}\circ\theta\circ\tau(u,v) = (\widetilde{f}(u,v ),\widetilde{g}(u,v))$.\\
	
	\begin{remark}\label{r5}
		The linear part of the polynomial $g$ is equal to $\lambda v, \lambda \neq 0$, moreover, the automorphism preserves the origin, that is, $(\widetilde{f}(0,0),\widetilde{g}(0, 0)) = (0,0)$.
	\end{remark}
	
	\begin{lemma}\label{up}
		For elementary automorphisms $\tau \in W$, $\theta \in D$, there exists an automorphism $s_{\tau,\theta} \in W$ such that the automorphism $s_{\tau,\theta}\circ\ tau^{-1}\circ\theta\circ\tau$ lifts to a space automorphism.
	\end{lemma}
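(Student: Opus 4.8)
The plan is to compute $\tau^{-1}\circ\theta\circ\tau$ explicitly, read off from Lemma \ref{ma} the single obstruction to lifting, and then remove it by a suitable element of $W$. Write $\theta = (u,\lambda v + u^k)$ and $\tau = (\mu u + f(v),\,v)$ with $\lambda,\mu\in\K^\times$. A direct substitution gives $\tau^{-1}\circ\theta\circ\tau = (\widetilde f,\widetilde g)$ with
\[
\widetilde g = \lambda v + (\mu u + f(v))^{k},\qquad
\widetilde f = u + \mu^{-1}f(v) - \mu^{-1}f\bigl(\widetilde g\bigr).
\]
Being a composition of $\widetilde\varGamma$-graded automorphisms, $\widetilde f$ is homogeneous of degree $\overline a$ and $\widetilde g$ of degree $\overline b$; by Remark \ref{r5} the composition fixes the origin and the linear part of $\widetilde g$ equals $\lambda v$ with $\lambda\neq 0$. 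Consequently $\widetilde g$ has zero intercept, so in the criterion of Lemma \ref{ma} the only way this automorphism can fail to lift is that $\widetilde f$ contains a pure power $v^{q}$ with $bq < a$.

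The key observation is that every such offending monomial is forced by homogeneity to satisfy $bq\equiv a\pmod c$, while $bq<a$ means $q < a/b$; these are exactly the constraints defining the polynomial part of $W$. I would therefore seek $s_{\tau,\theta} = (\nu u + h(v),\,v)\in W$ (with $\nu\in\K^\times$ fixed, say $\nu=1$) for which
\[
s_{\tau,\theta}\circ\tau^{-1}\circ\theta\circ\tau = \bigl(\nu\widetilde f + h(\widetilde g),\ \widetilde g\bigr),
\]
choosing $h$ so that $\nu\widetilde f + h(\widetilde g)$ has no pure $v$-monomial of degree below $a/b$. Setting $u=0$ reduces this to a one-variable problem: with $G(v) := \widetilde g|_{u=0} = \lambda v + f(v)^{k}$, which by Remark \ref{r5} has zero constant term and linear coefficient $\lambda\neq 0$, I must arrange that $h(G(v))$ cancels $-\nu\,\widetilde f|_{u=0}$ in all degrees strictly below $a/b$.

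Because $G(v)=\lambda v + (\text{higher order})$ with $\lambda\neq 0$, the assignment $h\mapsto h(G)$ is triangular in the $v$-adic filtration: the coefficient of $v^{q}$ in $h(G)$ is $\lambda^{q}$ times the coefficient of $v^{q}$ in $h$, plus a combination of the lower-degree coefficients of $h$. Solving degree by degree — only for the finitely many $q<a/b$ with $bq\equiv a\pmod c$ — determines the coefficients of $h$ uniquely; since $G$ is homogeneous of degree $\overline b$, each term introduced keeps $h(G)$ homogeneous of degree $\overline a$, so $h$ is homogeneous of degree $\overline a$ with $\deg(h)<a/b$, and $h$ has no constant term because the origin is fixed. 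Hence $s_{\tau,\theta}\in W$, and by construction the first coordinate of $s_{\tau,\theta}\circ\tau^{-1}\circ\theta\circ\tau$ carries no pure power $v^{q}$ with $bq<a$, while its second coordinate is $\widetilde g$ with zero intercept; Lemma \ref{ma} then yields liftability. I expect the main obstacle to be the bookkeeping in this triangular cancellation: one must check that the coefficients produced never require a degree $\geq a/b$ and that the congruence $bq\equiv a\pmod c$ is automatically respected at every step, so that the corrector $h$ genuinely lands in $W$ rather than in the larger set $U$.
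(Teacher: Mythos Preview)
Your argument is correct and follows essentially the same route as the paper: identify via Lemma~\ref{ma} that the only obstruction to lifting $\tau^{-1}\circ\theta\circ\tau$ is the presence of pure powers $v^{q}$ with $bq<a$ in the first coordinate, then cancel them by left-composing with an element of $W$. The paper phrases the cancellation iteratively (kill the lowest offending $v^{m_1}$ with a single correction $s_1$, observe the minimal bad degree strictly increases, repeat finitely many times), whereas you solve the same triangular system in one stroke; these are the same construction, and your concerns about homogeneity and about $h$ staying in $W$ are handled automatically by the grading, exactly as you indicate.
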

	
	\begin{proof}
		By the remark \ref{r5}, an automorphism $\tau^{-1}\circ\theta\circ\tau$ does not lift to a space automorphism if and only if the polynomial $\widetilde{f}(u,v) $ contains monomials of the form $\nu v^m$, where $m < \dfrac{a}{b}, \nu \neq 0$. Among all such monomials, we choose a monomial with the minimum degree $m_1$. Now let $\lambda$ be the coefficient of the monomial $v$ in the polynomial $\widetilde{g}$. The coefficient $\lambda$ is nonzero due to the remark \ref{r5}. Consider the automorphism $s_1 = (u - \dfrac{\nu}{\lambda}v^{m_1},v)$. Consider a new automorphism $s_1\circ\tau^{-1}\circ\theta\circ\tau$ and the image $\widetilde{f}_1(u,v)$ of the variable $u$ under this automorphism. The minimum degree of monomials of the form $v^m$ in this automorphism has increased. So $s_{\tau,\theta} = s_l\circ\ldots\circ s_1$.
	\end{proof}
	
	Let us introduce the following notation: $\tau_\theta = s_{\tau,\theta}\circ\tau^{-1}; S = \{\tau_\theta\circ\theta\circ\tau|\tau \in W, \theta \in D\}.$
	
	\begin{lemma}
		The group $\widetilde{E} = \alpha(E)$ is generated by the subgroup $U$ and the set $S$.
	\end{lemma}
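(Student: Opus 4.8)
The plan is to prove the nontrivial inclusion $\widetilde{E}\subseteq\langle U,S\rangle$; the reverse inclusion is immediate, since every element of $U$ lifts to a space automorphism and every element of $S$ lifts by Lemma~\ref{up}, so both $U$ and $S$ lie in the group $\widetilde{E}=\alpha(E)$. For the main inclusion I would first reduce an arbitrary $\widetilde{\varphi}\in\widetilde{E}$ to a word whose letters lie in the three families $U$, $W$, $D$. Indeed, Proposition~\ref{p2} writes $\widetilde{\varphi}=\xi'_n\circ\cdots\circ\xi'_1$ as a composition of origin-fixing graded elementary automorphisms, each of which is either a $u$-shear $(\lambda u+F(v),v)$ or a $v$-shear $(u,\mu v+G(u))$. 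Splitting the homogeneous polynomial $F$ at the threshold $a/b$ as $F=F_U+F_W$, with $\underline{\deg}(F_U)\geq a/b$ and $\deg(F_W)<a/b$, exhibits a $u$-shear as a product of an element of $U$ and an element of $W$; writing $G=\sum_k d_k u^k$ and absorbing the coefficients $d_k$ and the scalar $\mu$ into $u$-scalings (which lie in $U$) exhibits a $v$-shear as a product of elements of $D$ and $U$. I would also record that $D\subseteq S$: taking $\tau=\mathrm{id}\in W$ in the definition of $S$ one may take the correction to be the identity, since $\theta$ already lifts, so $\theta=\tau_\theta\circ\theta\circ\tau\in S$.

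The heart of the argument is a set of rewriting rules that move every $W$-letter to the far left of the word, at the cost of turning $D$-letters into $S$-letters. There are three rules, according to the letter $X$ standing immediately to the left of a $W$-letter $\tau$. First, if $X=u\in U$, then $u\circ\tau$ is again a $u$-shear, hence equals $\tau''\circ u''$ with $\tau''\in W$ and $u''\in U$ by the same threshold splitting. Second, if $X=\theta\in D$, then from $\tau^{-1}\circ\theta\circ\tau=s_{\tau,\theta}^{-1}\circ\sigma$ with $\sigma=\tau_\theta\circ\theta\circ\tau\in S$ (Lemma~\ref{up}) we obtain $\theta\circ\tau=(\tau\circ s_{\tau,\theta}^{-1})\circ\sigma=\tau'\circ\sigma$ with $\tau'\in W$ and $\sigma\in S$. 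Third, if $X=\sigma\in S$, writing $\sigma=s\circ\tau_0^{-1}\circ\theta\circ\tau_0$ with $s\in W$ gives $\sigma\circ\tau=s\circ\tau_0^{-1}\circ\theta\circ(\tau_0\circ\tau)$, and applying the second rule to $\theta\circ(\tau_0\circ\tau)$ yields $\sigma\circ\tau=w\circ\sigma_1$ with $w\in W$ and $\sigma_1\in S$. In every case the $W$-letter moves one step to the left and the displaced letter remains in $U\cup S$.

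Finally I would run these rules to completion. Assigning to each word the sum, over its $W$-letters, of the number of non-$W$-letters lying to the left of each, every rule application strictly decreases this quantity, so the process terminates with all $W$-letters collected at the left; multiplying them gives $\widetilde{\varphi}=\tau_{\mathrm{tot}}\circ\omega$ with $\tau_{\mathrm{tot}}\in W$ and $\omega$ a product of letters from $U\cup S\cup D\subseteq\langle U,S\rangle$ (using $D\subseteq S$). Since $\widetilde{\varphi}\in\widetilde{E}$ and $\omega\in\langle U,S\rangle\subseteq\widetilde{E}$, we get $\tau_{\mathrm{tot}}=\widetilde{\varphi}\circ\omega^{-1}\in\widetilde{E}$, so $\tau_{\mathrm{tot}}$ lifts to a space automorphism; but by Lemma~\ref{ma} an element of $W$ lifts only when its $f$-part vanishes, whence $\tau_{\mathrm{tot}}$ is a scaling $(\lambda u,v)\in U$. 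Therefore $\widetilde{\varphi}=\tau_{\mathrm{tot}}\circ\omega\in\langle U,S\rangle$.

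I expect the main obstacle to be establishing the third rule cleanly and, relatedly, proving that the rewriting terminates: one must verify that pushing a $W$-letter past an $S$-letter reproduces an $S$-letter rather than some new uncontrolled family, which is exactly what the computation reducing it to the second rule guarantees via Lemma~\ref{up}. A secondary technical point is the bookkeeping for scalings — ensuring that $u$-scalings are counted inside $U$ and that $v$-scalings are produced from $D$ — together with the degenerate cases in which $F_U$, $F_W$, or $G$ vanishes.
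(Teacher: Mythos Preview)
Your argument is correct and rests on the same three ingredients the paper uses: the normalized factorization from Proposition~\ref{p2}, the correction of Lemma~\ref{up}, and the fact (via Lemma~\ref{ma}) that an element of $W$ lifts only when it is a pure $u$-scaling. The organization differs. The paper inducts on the number $m$ of letters coming from $D$: at the first such letter $\xi_k$ it writes the prefix $\xi_{k-1}\circ\cdots\circ\xi_1\in\langle U,W\rangle$ as $\tau\circ\tau_1$ with $\tau\in W$, $\tau_1\in U$, then inserts $\tau\circ\tau^{-1}$ and $s^{-1}\circ s$ to produce a single $S$-factor $s\circ\tau^{-1}\circ\xi_k\circ\tau$ on the right, and applies the inductive hypothesis to the remaining left factor, which still lifts and now contains only $m-1$ letters from $D$. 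Because the $S$-factors are peeled off on the right and never revisited, the paper never needs your Rule~3 (commuting $W$ past $S$); your rewriting-system presentation is equivalent but trades that extra commutation rule for a clean termination metric.
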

	
	\begin{proof}
		Take $\widetilde{\varphi} \in \widetilde{E}$.
		Let $\widetilde{\varphi} = \xi_n\circ\ldots\circ\xi_1$, where $\xi_k$ are elementary graded automorphisms, and, according to the proposition \ref{p2}, we can assume that $\xi_k$ lies in $D\cup U\cup W$. Let among these automorphisms $m$ automorphisms from $D$. We show by induction on $m$ that the automorphism $\widetilde{\varphi}$ can be represented as a composition of automorphisms from $U$ and $S$.\\
		
		{\it Base.}\\
		For $m = 0$ the automorphism $\widetilde{\varphi}$ lies in $\langle W\cup U\rangle$. Moreover, it rises to a space automorphism, and hence lies in $U$.
		
		{\it Step.}\\
		Let $\xi_{k-1},\ldots,\xi_1$ lie in $U\cup W$ and $\xi_k$ lie in $D$. Let $\xi_{k-1}\circ\ldots\circ\xi_1 = \tau\circ\tau_1$, where $\tau\in W, \tau_1\in U$. Let us rewrite the automorphism $\widetilde{\varphi}$ as $\widetilde{\varphi} =\xi_n\circ\ldots\circ\xi_{k+1}\circ\tau\circ\tau^{-1}\circ \xi_k\circ\tau\circ\tau_1$. By the lemma \ref{up}, there is an automorphism $s_{\tau,\xi_{k}} \in W$ such that the automorphism $s_{\tau,\xi_{k}}\circ\tau^{-1} \circ\xi_k\circ\tau = \tau_{\xi_k}\circ\xi_k\circ\tau$ lifts to a space automorphism. Thus $$\widetilde{\varphi} = \left(\xi_n\circ\ldots\circ\xi_{k+1}\circ\tau_1\circ s^{-1}\right)\circ\left(\ tau_{\xi_k}\circ\xi_k\circ\tau\right)\circ\tau_1.$$ Moreover, the composition $\xi_n\circ\ldots\circ\xi_{k+1}\circ\tau\circ s^{ -1}$ rises to a space automorphism and the inductive assumption holds for it.\\
	\end{proof}
	
	Thus, the following theorem is proved.
	
	\begin{theorem}
		Automorphisms of the algebra $\Aut_{\varGamma}(\A)$ are generated by automorphisms from $\alpha^{-1}(U)$, automorphisms of the form $\alpha^{-1}(\tau_{\theta}\circ\ theta\circ\tau)$, where $\tau \in W, \theta \in D$ and automorphisms from the group $\mathrm{T}=\left\lbrace(x, y, \lambda z)\mid \lambda \neq 0\right\rbrace$.
	\end{theorem}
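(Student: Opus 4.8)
The plan is to assemble this statement from the structural results already in place, since the genuine combinatorial work was done in the preceding lemma describing generators of $\widetilde{\mathrm{E}}$; what remains is a bookkeeping argument. The three ingredients I would invoke are the semidirect product decomposition $\Aut_{\varGamma}(\A)\cong\mathrm{E}\leftthreetimes\mathrm{T}$ (Lemma~2.2), the injectivity of the restriction homomorphism $\alpha\colon\mathrm{E}\to\Aut(\K[u,v])$, and the lemma just proved, namely that $\widetilde{\mathrm{E}}=\alpha(\mathrm{E})$ is generated by the subgroup $U$ together with the set $S=\{\tau_\theta\circ\theta\circ\tau\mid\tau\in W,\ \theta\in D\}$.

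First I would use the semidirect product decomposition to write every element of $\Aut_{\varGamma}(\A)$ as a product of an element of $\mathrm{E}$ and an element of $\mathrm{T}$, so that $\Aut_{\varGamma}(\A)=\langle\mathrm{E}\cup\mathrm{T}\rangle$. Since $\mathrm{T}$ already appears verbatim among the claimed generators, the task reduces to exhibiting a generating set of $\mathrm{E}$ built from the listed automorphisms. For this I would exploit injectivity of $\alpha$: it makes $\alpha$ an isomorphism of $\mathrm{E}$ onto its image $\widetilde{\mathrm{E}}$, and therefore $\alpha^{-1}$ carries any generating set of $\widetilde{\mathrm{E}}$ to a generating set of $\mathrm{E}$. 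Applying this to the generating set $U\cup S$ supplied by the preceding lemma gives $\mathrm{E}=\langle\alpha^{-1}(U)\cup\alpha^{-1}(S)\rangle$, and $\alpha^{-1}(S)$ is exactly the collection of automorphisms $\alpha^{-1}(\tau_\theta\circ\theta\circ\tau)$ with $\tau\in W$, $\theta\in D$.

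The one point that genuinely requires checking — and the only place where the argument could slip — is that $\alpha^{-1}$ is actually defined on the generators $U\cup S$, i.e.\ that each of them lies in the image $\widetilde{\mathrm{E}}$, equivalently lifts to a space automorphism. For the elements of $U$ this is recorded in the remark following the definitions of $D,U,W$ (via Lemma~\ref{ma}), and for the elements of $S$ it is precisely the content of Lemma~\ref{up}, which was constructed so that the correction $\tau_\theta=s_{\tau,\theta}\circ\tau^{-1}$ makes $\tau_\theta\circ\theta\circ\tau$ liftable. Thus both $U$ and $S$ sit inside $\widetilde{\mathrm{E}}$, and $\alpha^{-1}$ is unambiguous on them.

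With this verified, I would conclude by combining the two reductions: $\Aut_{\varGamma}(\A)$ is generated by $\mathrm{E}$ and $\mathrm{T}$, while $\mathrm{E}$ is generated by $\alpha^{-1}(U)$ and $\alpha^{-1}(S)$. Hence the whole group is generated by $\alpha^{-1}(U)$, the automorphisms $\alpha^{-1}(\tau_\theta\circ\theta\circ\tau)$ for $\tau\in W,\ \theta\in D$, and the torus $\mathrm{T}=\{(x,y,\lambda z)\mid\lambda\neq 0\}$, which is the asserted generating set. I expect no real obstacle here beyond the liftability check above; the substance of the result lives in Proposition~\ref{p2}, Lemma~\ref{up}, and the generation of $\widetilde{\mathrm{E}}$, and this final theorem is their repackaging through the isomorphism $\mathrm{E}\cong\widetilde{\mathrm{E}}$ and the splitting $\Aut_{\varGamma}(\A)\cong\mathrm{E}\leftthreetimes\mathrm{T}$.
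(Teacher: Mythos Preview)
Your proposal is correct and follows exactly the paper's approach: the paper states this theorem with no separate proof, writing only ``Thus, the following theorem is proved'' immediately after the lemma that $\widetilde{\mathrm{E}}$ is generated by $U\cup S$, treating the result as an immediate consequence of that lemma together with the semidirect product decomposition and the injectivity of $\alpha$. Your write-up simply makes explicit the bookkeeping (including the liftability check for $U$ and $S$) that the paper leaves implicit.
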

	
	\section{Examples}
	Let the algebra $\K[x,y,z]$ be graded as follows: $(\deg(x),\deg(y),\deg(z)) = (3,1,-1)$.
	Such a grading admits graded wild automorphisms according to Theorem 1. Under such a grading we have:
	$$D = \{(u, \lambda v + \mu u^k)|ka \equiv b(\mathrm{mod}\ c), k > 0, \lambda \in \mathbb{K}^\times \},$$
	$$U = \{(\lambda u + f(v), v)|\underline{\deg}(f) \geq 3; \lambda, \mu \in \mathbb{K}^\times\},$$
	$$W = \{(\lambda u + f(v), v)|\underline{\deg}(f) > 1, \deg(f) < 3; \lambda,\mu \in \mathbb{K}^\times\}.$$
	
	Let $\tau = \begin{pmatrix}
		u + v^2\\
		v
	\end{pmatrix} \in W$ and let $\theta = \begin{pmatrix}
		u\\
		v+u
	\end{pmatrix} \in D$. Then we have:
	$$\tau^{-1}\circ\theta\circ\tau = \begin{pmatrix}
		u - u^2 - v^4 - 2uv - 2v^3 - 2uv^2\\
		v + u + v^2
	\end{pmatrix}$$
	Now consider the corresponding automorphism of the polynomial algebra in three variables:
	$$\sigma = \alpha^{-1}(\tau^{-1}\circ\theta\circ\tau) = \begin{pmatrix}
		x - x^2z^3 - y^4z - 2xyz - 2y^3 - 2xy^2z^2\\
		y + xz^2 + y^2z\\
		z
	\end{pmatrix}$$
	The resulting automorphism $\sigma$ coincides with the Nagata automorphism \cite{b7}. Thus, the Nagata automorphism lies in the system of generators of the group of graded automorphisms.\\
	Let us present the general form of an automorphism of the form $\tau_\theta\circ\theta\circ\tau$.\\ Let
	$\tau = \begin{pmatrix}
		\lambda_1u + \nu v^2\\
		v
	\end{pmatrix} \in W$ and let $\theta = \begin{pmatrix}
		u\\
		\lambda_2 v +\mu u^k
	\end{pmatrix} \in D$.
	Then
	
	$$\tau^{-1}\circ\theta\circ\tau = \begin{pmatrix}
		u + \frac{\nu}{\lambda_1}v^2 - \frac{\nu}{\lambda_1} \left(\lambda_2v + \mu(\lambda_1 u + \nu v^2)^k\right) ^2\\
		\lambda_2 v + \mu(\lambda_1 u + \nu v^2)^k
	\end{pmatrix}$$
	
	The coefficient of $v^2$ in the polynomial $\tau^{-1}\circ\theta\circ\tau(u)$ is equal to $\dfrac{(1-\lambda_1^2)\nu}{\lambda_1}$ .\\ Thus, the automorphism $\tau_\theta\circ\theta\circ\tau$ is equal to $s_{\tau,\theta}\circ\tau^{-1}\circ\theta\circ\tau$ , where $$s_{\tau,\theta} = \begin{pmatrix}
		u - \frac{(1-\lambda_1^2)\nu}{\lambda_1\lambda_2^2}v^2\\
		v
	\end{pmatrix}.$$


\begin{thebibliography}{99}
		
		\bibitem{b5}
		H. W. E.  Jung, \"Uber ganze birationale transformationen der Ebene, J. reine, angew. Math., {\bf 184}, 1942, 161--174.
		
		\bibitem{b1}
		A. van den Essen, Polynomial Automorphisms and the Jacobian Conjecture, Progress in Math, Birk\"hauser, Basel, {\bf 190}, 2000.
		
		\bibitem{b2}
		I. Shestakov and U. Umirbaev, The tame and the wild automorphisms of polynomial rings in three variables, J. Amer. Math. Soc, {\bf 17}, 2004, 197--227.
		
		\bibitem{b7}
		Nagata, M., On Automorphism Group of $k[x,y]$, Kyoto University, Lectures in Mathematics, {\bf 5}, 1972.
		
		\bibitem{b3}
		I. V. Arzhantsev, S. A. Gaifullin, Cox rings, semigroups and automorphisms of affine algebraic varieties, Sb. Math, {\bf 201}, 1, 2010, 3--24.
		
		\bibitem{b6}
		A. Trushin, Gradings allowing wild automorphisms, Journal of Algebra and Its Applications, 2022, 2021.
		
		
	\end{thebibliography}
\end{document}